\def\BibTeX{{\rm B\kern-.05em{\sc i\kern-.025em b}\kern-.08em
    T\kern-.1667em\lower.7ex\hbox{E}\kern-.125emX}}
\newtheorem{Def}{\sc Definition}
\newtheorem{Thm}{\sc Theorem}
\newtheorem{Cor}[Thm]{\sc Corollary}
\begin{document}

\title{Saturated Models in Mathematical Fuzzy Logic*\\
\thanks{This is a pre-print of a paper published as: Saturated Models in Mathematical Fuzzy Logic. \emph{ Proceedings of the IEEE International  Symposium on Multiple-Valued Logic 2018},  IEEE Computer Society: 150-155.
We are indebted to the anonymous referees for their helpful comments. Guillermo Badia is supported by the project I 1923-N25 of the Austrian Science Fund (FWF). Carles Noguera is supported by the project GA17-04630S of the Czech Science Foundation (GA\v{C}R) and has also received funding from the European Union's Horizon 2020
research and innovation programme under the Marie Sklodowska-Curie grant
agreement No 689176 (SYSMICS project).}
}

\author{\IEEEauthorblockN{Guillermo Badia}
\IEEEauthorblockA{\textit{Department of Knowledge-Based Mathematical Systems} \\
\textit{Johannes Kepler University Linz}\\
Linz, Austria \\
guillebadia89@gmail.com}
\and
\IEEEauthorblockN{Carles Noguera}
\IEEEauthorblockA{\textit{Institute of Information Theory and Automation} \\
\textit{Czech Academy of Sciences}\\
Prague, Czech Republic\\
noguera@utia.cas.cz}
}

\maketitle

\begin{abstract}
This paper considers the problem of building saturated models for first-order graded logics. We define types as pairs of sets of formulas in one free variable which express properties that an element is expected, respectively, to satisfy and to falsify. We show, by means of an elementary chains construction, that each model can be elementarily extended to a saturated model where as many types as possible are realized. In order to prove this theorem we obtain, as by-products, some results on tableaux (understood as pairs of sets of formulas) and their consistency and satisfiability, and a generalization of the Tarski--Vaught theorem on unions of elementary chains.
\end{abstract}

\begin{IEEEkeywords}
mathematical fuzzy logic, first-order graded logics, uninorms, residuated lattices, logic UL, types, saturated models, elementary chains
\end{IEEEkeywords}

\section{Introduction}
Mathematical fuzzy logic studies graded logics as particular kinds of many-valued inference systems in several formalisms, including first-order predicate languages. Models of such first-order graded logics are variations of classical structures in which predicates are evaluated over wide classes of algebras of truth degrees, beyond the classical two-valued Boolean algebra. Such models are relevant for recent computer science developments in which they are studied as {\em weighted structures} (see e.g.~\cite{HoMoVi:VCSP}).

The study of models of first-order fuzzy logics is based on the corresponding strong completeness theorems \cite{cin,cin1} and has already addressed several crucial topics such as: characterization of completeness properties with respect to models based on particular classes of algebras~\cite{Cintula-EGGMN:DistinguishedSemantics}, models of logics with evaluated syntax~\cite{Novak-Perfilieva-Mockor:2000,MuNo}, study of mappings and diagrams~\cite{de3}, ultraproduct constructions~\cite{Dellunde:RevisitingUltraproducts, de4}, characterization of elementary equivalence in terms of elementary mappings~\cite{de}, characterization of elementary classes as those closed under elementary equivalence and ultraproducts~\cite{de4}, L\"owenheim--Skolem theorems~\cite{de2}, and back-and-forth systems for elementary equivalence \cite{de5}. A related stream of research is that of continuous model theory~\cite{Chang-Keisler:ContinuousModelTheory,Caicedo:StrengthLukasiewicz}.

Another important item in the classical agenda is that of saturated models, that is, the construction of structures rich in elements satisfying many expressible properties. In continuous model theory the construction of such models is well known (cf. \cite{ben}). However, the problem has not yet been addressed in mathematical fuzzy logic, but only formulated in~\cite{de4}, where Dellunde suggested that saturated models of fuzzy logics could be built as an application of ultraproduct constructions. This idea followed the classical tradition found in~\cite{ck}. However, in other classical standard references such as \cite{h,m, sacks} the construction of saturated structures is obtained by other methods. The goal of the present paper is to show the existence of saturated models for first-order graded logics by means of an elementary construction. 

The paper is organized as follows: after this introduction, Section~\ref{s:prelim} presents the necessary preliminaries we need by recalling several semantical notions from mathematical fuzzy logic, namely, the algebraic counterpart of extensions of the uninorm logic \UL, fuzzy first-order models based on such algebras, and some basic model-theoretic notions. Section~\ref{s:Tableaux} introduces the notion of tableaux (necessary for our treatment of types) as pairs of sets of formulas and proves that each consistent tableau has a model. Section~\ref{s:Main} defines types as pairs of sets of formulas in one free variable (roughly speaking, expressing the properties that an element should satisfy and falsify) and contains the main results of the paper: a fuzzy version of the Tarski-Vaught theorem for unions of elementary chains and the existence theorem for saturated models. Finally, Section~\ref{s:Conclusions} ends the paper with some concluding remarks.

\section{Preliminaries}\label{s:prelim}

In this section we introduce the object of our study, fuzzy first-order models, and several necessary related notions for the development of the paper. For comprehensive information on the subject, one may consult the handbook of Mathematical Fuzzy Logic~\cite{Cintula-FHN:HBMFL} (e.g. Chapters 1 and 2).

We choose, as the underlying propositional basis for the first-order setting, the class of residuated uninorm-based logics~\cite{Metcalfe-Montagna:SubstructuralFuzzy}. This class contains most of the well-studied particular systems of fuzzy logic that can be found in the literature and has been recently proposed as a suitable framework for reasoning with graded predicates in~\cite{Cintula-Noguera-Smith:GradedLORI}, while it  retains important properties, such as associativity and commutativity of the residuated conjunction, that will be used to obtain the results of this paper.

The algebraic semantics of such logics is based on {\em \UL-algebras}, that is, algebraic structures in the language $\mathcal{L} = \{\wedge,\vee,\conj,\to,\0,\1,\bot,\top\}$ of the form
$\alg{A}=\tuple{A,\wedge^\alg{A},\vee^\alg{A},\conj^\alg{A},\to^\alg{A},\0^\alg{A},\1^\alg{A},\bot^\alg{A},\top^\alg{A}}$ such that
\begin{flushleft}
\begin{itemize}
\item $\tuple{A,\wedge^\alg{A},\vee^\alg{A},\bot^\alg{A},\top^\alg{A}}$ is a bounded lattice,
\item $\tuple{A,\conj^\alg{A},\1^\alg{A}}$ is a commutative monoid,
\item for each $a,b,c\in A$, we have:
\begin{align*}
& a\conj^\alg{A} b\leq c  \quad \mathrm{iff }  \quad  b\leq a \to^\alg{A} c, & \mathrm{(res)}\\
&((a \to^\alg{A} b)\land\1^\alg{A}) \lor^\alg{A} ((b \to^\alg{A} a)\land^\alg{A}\1^\alg{A})  = \1^\alg{A} & \mathrm{(lin)}
\end{align*}
\end{itemize}
\end{flushleft}

$\alg{A}$ is called a {\em \UL-chain} if its underlying lattice is linearly ordered. {\em Standard} \UL-chains are those define over the real unit interval $[0,1]$ with its usual order; in that case the operation $\conj^\alg{A}$ is a residuated uninorm, that is, a left-continuous binary associative commutative monotonic operation with a neutral element $\1^\alg{A}$ (which need not coincide with the value $1$).

Let $\fm$ denote the set of propositional formulas written in the language of \UL-algebras with a denumerable set of variables and let $\FM$ be the absolutely free algebra defined on such set. Given a \UL-algebra $\alg{A}$, we say that an {\em $\alg{A}$-evaluation} is a homomorphism from $\FM$ to $\alg{A}$.  The logic of all \UL-algebras is defined by establishing, for each $\Gamma \cup \{\f\} \subseteq \fm$, $\Gamma \models \f$ if and only if, for each \UL-algebra $\alg{A}$ and each $\alg{A}$-evaluation $e$, we have $e(\f) \geq \1^\alg{A}$, whenever $e(\p) \geq \1^\alg{A}$ for each $\p \in \Gamma$. The logic \UL\ is, hence, defined as preservation of truth over all \UL-algebras, where the notion of truth is given by the set of designated elements, or {\em filter}, $\mathcal{F}^{\alg{A}} = \{a \in A \mid a \geq \1^\alg{A}\}$. The standard completeness theorem of \UL\ proves that the logic is also complete with respect to its intended semantics: the class of \UL-chains defined over $[0,1]$ by residuated uninorms (the standard \UL-chains); this justifies the name of \UL\ (uninorm logic).

Most well-known propositional fuzzy logics can be obtained by extending \UL\  with additional axioms and rules (in a possibly expanded language). Important examples are G\"odel--Dummett logic \G\ and \L ukasiewicz logic $\mathrmL$. 

A {\em predicate language} $\pl$ is a triple $\tuple{\mathbf{P,F,ar}}$, where $\mathbf{P}$ is a non-empty set of predicate symbols, $\mathbf{F}$ is a set of function symbols, and $\mathbf{ar}$ is a function assigning to each symbol a natural number called the \emph{arity} of the symbol. Let us further fix a denumerable set $V$ whose elements are called {\em object variables}. The sets of {\em $\pl$-terms}, {\em atomic $\pl$-formulas}, and {\em $\tuple{\lang{L},\pl}$-formulas} are defined as in classical logic. A {\em \pl-structure} $\model{M}$ is a pair $\tuple{\alg{A},\struct{M}}$ where $\alg{A}$ is a \UL-chain and $\struct{M} = \tuple{M,\Tuple{P_\struct{M}}_{P\in\mathbf{P}},\Tuple{F_\struct{M}}_{F\in\mathbf{F}}}$, where
$M$ is a non-empty domain; $P_\struct{M}$ is a function $M^n\to A$, for each $n$-ary predicate symbol $P\in\mathbf{P}$; and $F_\struct{M}$ is a function $M^n\to M$ for each
$n$-ary function symbol $F\in\mathbf{F}$. An {\em $\model{M}$-evaluation} of the object variables is a mapping $\eval{v} \colon V \to M$; by $\eval{v}[x{\to}a]$ we denote the $\model{M}$-evaluation where $\eval{v}[x{\to}a](x) = a$ and $\eval{v}[x{\to}a](y) = \eval{v}(y)$ for each object variable $y\neq x$. We define the {\em values} of the terms and the {\em truth values} of the formulas as (where for ${\circ}$ stands for any $n$-ary connective in $\lang{L}$):
\begin{center}
\begin{tabular}{rcll}
$\semvalue{x}^\model{M}_\eval{v}$ & $=$ & $\eval{v}(x) ,$
\\[0.5ex]
$\semvalue{F(\vectn{t})}^\model{M}_\eval{v}$ & $=$ & $F_\struct{M}(\semvalue{t_1}^\model{M}_\eval{v}\!,\,\dots
,\,\semvalue{t_n}^\model{M}_\eval{v}),$
\\[0.5ex]
$\semvalue{P(\vectn{t})}^\model{M}_\eval{v}$ & $=$ &
$P_\struct{M}(\semvalue{t_1}^\model{M}_\eval{v}\!,\,\dots ,\,\semvalue{t_n}^\model{M}_\eval{v}),$ &
\\[0.5ex]
$\semvalue{{\circ}(\vectn{\f})}^\model{M}_\eval{v}$ & $=$ & ${\circ}^\alg{A}(\semvalue{\f_1}^\model{M}_\eval{v}\!,\,\dots ,\,\semvalue{\f_n}^\model{M}_\eval{v}),$ &
\\[0.5ex]
$ \semvalue{\All{x}\f}^\model{M}_\eval{v}$ & $=$ & $\inf_{\leq_\alg{A}}
\{\semvalue{\f}^{\model{M}}_{\eval{v}[x{\to}m]} \mid m\in M\},$
\\[0.5ex]
$ \semvalue{\Exi{x}\f}^\model{M}_\eval{v}$ & $=$ & $\sup_{\leq_\alg{A}}
\{\semvalue{\f}^{\model{M}}_{\eval{v}[x{\to}m]} \mid m\in M\}.$
\end{tabular}
\end{center}
If the infimum or supremum does not exist, the corresponding value is
undefined. We say that $\model{M}$ is a {\em safe} if $\semvalue{\f}^{\model{M}}_{\eval{v}}$ is defined for each\/ $\pl$-formula $\f$ and each\/ $\model{M}$-evaluation $\eval{v}$. Formulas without free variables are called {\em sentences} and a set of sentences is called a {\em theory}. Observe that if $\f$ is a sentence, then its value does not depend on a particular $\model{M}$-evaluation; we denote its value as $\semvalue{\f}_{\struct{M}}^{\alg{A}}$. If $\f$ has free variables among $\{x_1,\ldots,x_n\}$ we will denote it as $\f(x_1,\ldots,x_n)$; then the value of the formula under a certain evaluation $\eval{v}$ depends only on the values given to the free variables; if $v(x_i) = d_i \in M$ we denote $\semvalue{\f}^\model{M}_\eval{v}$ as $\semvalue{\f(d_1,\ldots,d_n)}^\alg{A}_\struct{M}$. We say that $\model{M}$ is a {\em model} of a theory $T$, in symbols $\model{M} \models T$, if it is safe and for each $\f \in T$, $\semvalue{\f}_{\struct{M}}^{\alg{A}} \geq \1^\alg{A}$.

Observe that we allow arbitrary UL-chains and we do not focus in any kind of standard completeness properties.

Using the semantics just defined, the notion of semantical consequence is lifted from the propositional to the first-order level in the obvious way. Such first-order logics satisfy two important properties that we will use in the paper (see e.g.~\cite{Cintula-Horcik-Noguera:Quest}), for each theory $T \cup \{\f,\p,\x\}$ (inductively defining for each formula $\alpha$: $\alpha^0 = \1$, and for each natural $n$, $\alpha^{n+1}= \alpha^n \conj \alpha$):

\begin{enumerate}
\item Local deduction theorem: $T, \f \models \p$ if, and only if, there is a natural number $n$ such that $T \models (\f \land \1)^n \to \p$.
\item Proof by cases: If $T, \f \models \x$ and $T, \p \models \x$, then\\$T, \f \lor \p \models \x$.
\item Consequence compactness: If $T \vDash \f$, then for some finite $T_0 \subseteq T$, $T_0 \vDash \f$.
\end{enumerate}
Observe that alternatively we could have introduced calculi and a corresponding notion of deduction $\vdash$ for these logics, but we prefer to keep the focus of the paper on the semantics.

\section{Tableaux}\label{s:Tableaux}
A \emph{tableau} is a pair $\tuple{T, U}$ such that $T$ and $U$ are sets of formulas. A tableau $\tuple{T_0, U_0}$ is called a {\em subtableau} of $\tuple{T, U}$ if $T_0 \subseteq T$ and $U_0 \subseteq U$. $\tuple{T, U}$ is \emph{satisfied } by a model  $\model{M}= \tuple{\alg{ A}, {\bf M}}$, if there is an $\model{M}$-evaluation $\eval{v}$ such that for each $\f \in T$, $\semvalue{\f}^{\model{M}}_{\eval{v}}\geq \1^\alg{A}$, and for all $\p \in U$, $\semvalue{\p}^{\model{M}}_{\eval{v}} < \1^\alg{A}$. Also, we write  $\tuple{T, U} \models \f$ meaning that for any model and evaluation that satisfies $\tuple{T, U}$, the model and the evaluation must make $\f$ true as well. A tableau $\tuple{T, U}$ is said to be \emph{consistent} if there is no  finite $U_0 \subseteq U$ such that $T \models \bigvee U_0$. In the extreme case,  we define $\bigvee \emptyset$ as $\bot$. Our choice of terminology here comes from \cite{cha}, where such tableaux are introduced for the intuitionistic setting, where Boolean negation is also absent. The intuitive idea is that in a semantic tableau as we go along we try to make everything on the left true while falsifying everything on the right.

Following~\cite{cin}, we say that a set of sentences $T$ is a {\em $\exists$-Henkin theory} if, whenever $T \models \Exi{x} \f (x)$, there is a constant $c$ such that $T \models \f (c)$. $T$ is a {\em Henkin theory} if $T \not\models \All{x} \f (x)$ implies that there is a constant $c$ such that $T \not\models \f (c)$. $T$ is {\em doubly Henkin} if it is both $\exists$-Henkin and Henkin. $T$ is a {\em linear theory} if for any pair of sentences $\f, \psi$ either $T \models \f \rightarrow \psi$ or $T \models \psi \rightarrow \f$.

The following result shows that each consistent tableau has a model, which will be necessary in the next section.

\begin{Thm}\emph{(Model Existence Theorem)}  Let $\tuple{T, U}$ be a consistent tableau.  Then there is a model that satisfies  $\tuple{T, U}$. \end{Thm}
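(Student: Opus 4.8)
The plan is to carry out a Henkin-style construction adapted to tableaux: enlarge the language with a supply of fresh witnessing constants (treating, in particular, the free variables occurring in $T \cup U$ as new constants, so that the whole tableau becomes a tableau of sentences), then extend $\tuple{T,U}$ to a maximal consistent tableau that is linear and doubly Henkin, and finally read off a canonical term model from this maximal tableau. The identity evaluation of the closed terms will then witness satisfaction of the original tableau.

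The first step is a dichotomy lemma: if $\tuple{T,U}$ is consistent and $\f$ is any sentence, then at least one of $\tuple{T\cup\{\f\},U}$ and $\tuple{T,U\cup\{\f\}}$ is consistent. I would argue by contradiction using proof by cases: inconsistency of both would supply finite $U_0, U_1' \subseteq U$ with $T, \f \models \bigvee U_0$ and $T \models \f \lor \bigvee U_1'$; since each disjunct of $\bigvee U_1'$ trivially entails $\bigvee(U_0 \cup U_1')$, proof by cases yields $T, \f \lor \bigvee U_1' \models \bigvee(U_0 \cup U_1')$, whence $T \models \bigvee(U_0 \cup U_1')$, contradicting consistency of $\tuple{T,U}$. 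Iterating this decision over an enumeration of all sentences, interleaved with the witnessing steps below, produces a maximal consistent tableau $\tuple{T^*, U^*}$ in which $T^* \cup U^*$ is the whole set of sentences.

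The witnessing steps need their own consistency checks. For the $\exists$-Henkin property, whenever $T^* \models \Exi{x}\f(x)$ I would choose a fresh constant $c$ and add $\f(c)$ on the left; a putative refutation $T, \f(c) \models \bigvee U_0$ would, because $c$ occurs nowhere in the relevant finite data, generalize (via the local deduction theorem and generalization on constants, following \cite{cin}) to $T, \Exi{x}\f(x) \models \bigvee U_0$, hence to $T \models \bigvee U_0$, a contradiction. Dually, when a sentence $\All{x}\f(x)$ lies on the right I would add a fresh instance $\f(c)$ on the right; here the essential point is that every model is a \UL-\emph{chain}, so that $\lor$ distributes over the infimum defining $\All{}$, namely $\inf_m \max(a, g(m)) = \max(a, \inf_m g(m))$ in a linear order. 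This distributivity is exactly what lets a refutation $T \models \bigvee U_0' \lor \f(c)$ be pushed back under the quantifier to $T \models \bigvee U_0' \lor \All{x}\f(x)$, contradicting consistency since $\All{x}\f(x) \in U$.

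Finally I would build the canonical model on the domain of closed terms, with truth-degree algebra the Lindenbaum--Tarski algebra of $T^*$ ordered by $\|\f\| \le \|\psi\|$ iff $T^* \models \f \to \psi$; linearity of $T^*$ guarantees this algebra is a \UL-chain, and the double Henkin property guarantees the model is safe, i.e. that the suprema and infima interpreting the quantifiers exist and are computed correctly over the term domain. A truth lemma, by induction on formula complexity, then shows that the value of each sentence in this model under the identity evaluation $\eval{v}$ equals its Lindenbaum--Tarski class; consequently every sentence of $T^* \supseteq T$ is designated, while every sentence of $U^* \supseteq U$ is not (since $T^* \models \psi$ for $\psi \in U^*$ would contradict consistency). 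I expect the main obstacle to be the quantifier cases of this truth lemma: securing safety and the exact agreement between the quantifier infima and suprema and the Lindenbaum values is the delicate point, and it is precisely there that linearity together with the Henkin witnesses must be combined.
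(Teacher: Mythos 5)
Your proposal is correct, and at its core it is the same Henkin-style argument as the paper's: enrich the language with fresh constants, extend $\langle T,U\rangle$ to a consistent tableau whose left-hand theory is linear and doubly Henkin (using the local deduction theorem, proof by cases, generalization on fresh constants, and the quantifier shifts valid over chains), and then read off the canonical Lindenbaum/term model in the style of H\'ajek--Cintula \cite{cin}, with consistency of the final tableau guaranteeing that everything on the right receives an undesignated value. The genuine difference is organizational. You maximalize: your dichotomy lemma (whose proof-by-cases argument is correct) decides every sentence onto one side, after which linearity of $T^*$ is automatic --- since $(\varphi\to\psi)\lor(\psi\to\varphi)$ is valid over all the chains in question, consistency prevents both implications from lying in $U^*$, so maximality forces one into $T^*$; note that you assert this linearity but do not include that one-line argument, which is the only elided step in your outline. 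The paper never maximalizes; following Lemma 2 of \cite{cin} it interleaves three kinds of stages: a Henkin-$\forall$ stage with an explicit case distinction (put $\forall x\,\chi(x)$ on the left if $T_s\models(\bigvee U'_s)\lor\forall x\,\chi(x)$ for some finite $U'_s\subseteq U_s$, else put a fresh instance $\chi(c)$ on the right), a dual Henkin-$\exists$ stage, and a dedicated linearity stage that adds one of the two implications to the left via proof by cases. Your route buys a cleaner, reusable completion lemma and makes the separate linearity stage unnecessary; the paper's route decides only quantified sentences and implications rather than every sentence, and can lean verbatim on the consistency checks already proved in \cite{cin}. Both arguments carry the same imported debts, so neither is less rigorous than the other: the canonical-model truth lemma (where the doubly Henkin property yields safety and the correct quantifier values, and linearity yields a chain) and the chain-validity of $\exists x\,\varphi(x)\models\exists x\,(\varphi(x)\wedge\bar{1})^n$ implicitly used in the $\exists$-witnessing consistency check.
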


\begin{proof} We will prove this for countable languages, though the generalization to arbitrary cardinals is straightforward. We start by adding a countable set $C$ of new constants to the language. We enumerate as $\f _0, \f_1, \f_2, \dots$ all the formulas of the expanded language, and as $\tuple{\theta _0, \psi_0}, \tuple{\theta_1, \psi_1}, \tuple{\theta_2, \psi_2}, \dots$ all pairs of such formulas. We modify the proofs of Theorem 4 and Lemma 2 from \cite{cin} by building two chains of theories $T_0 \subseteq \dots \subseteq T_n \subseteq \dots$ and $U_0 \subseteq \dots \subseteq U_n \subseteq \dots$ such that $\tuple{\bigcup_{i < \omega}T_i, \bigcup_{i<\omega} U_i}$ is a consistent tableau (checking that at every stage we obtain a consistent tableau $\tuple{T_i, U_i}$), plus $\bigcup_{i < \omega}T_i$ is a linear doubly Henkin theory. Then, we will simply construct the canonical model as in Lemma 3 from \cite{cin}. We proceed by induction:

{\sc Stage $0: $} Define $T_0 = T$ and $U_0 = U$.

{\sc Stage $s+1 = 3i+1: $} At this stage, we make sure that our final theory will be Henkin. To this end we follow the proof of Lemma 2 (1) from \cite{cin}. If $\f_i$ is not of the form $\All{x} \chi (x)$, then define $T_{s+1}=T_s$ and $U_{s+1}=U_s$. Assume now that  $\f_i =\All{x} \chi (x)$. Then, we consider the following two cases:

\begin{itemize}
\item[(i)] There is a finite $U'_s \subseteq U_s$ such that $T_s \models (\bigvee U'_s)  \vee \All{x} \chi (x)$. Then, we define $T_{s+1}=T_s \cup \{\All{x} \chi (x)\}$ and $U_{s+1}=U_s$.

\item[(ii)] Otherwise, let $T_{s+1}=T_s$  and $U_{s+1} = U_s \cup \{ \chi(c) \}$ (where $c$ is the first unused constant from $C$ up to this stage).
\end{itemize}

We have to check that $\tuple{T_{s+1},  U_{s+1}}$ is consistent in both cases. Suppose that (i) holds and that $T_s \cup \{\All{x} \chi (x)\} \models \bigvee U'_{s} $ for some finite $U'_{s} \subseteq U_{s}$ . By construction, we must have that $T_s \models (\bigvee U''_{s} ) \vee \All{x} \chi (x)$ for some finite $U''_{s} \subseteq U_{s}$. Take the finite set $\overline{U_s} = U'_s \cup U''_s$; clearly we also have $T_s \models (\bigvee \overline{U_{s}} ) \vee \All{x} \chi (x)$. Now, by the local deduction theorem, $T_s \models (\All{x} \chi (x) \wedge \bar{1})^n \rightarrow \bigvee \overline{U_{s}}$ for some $n$. On the other hand, $T_s \models \bigvee \overline{U_{s}}  \rightarrow \bigvee \overline{U_{s}}$. Now, recall that $\All{x} \chi (x) \models  (\All{x} \chi (x) \wedge \bar{1})^n$ (this follows from the rules $\f \models \f \wedge \bar{1}$ and $\f, \psi \models \f \conj \psi $). So, by proof by cases, we have that $T_s \cup \{ (\bigvee \overline{U_{s}}) \vee \All{x} \chi (x)\} \models \bigvee \overline{U_{s}}$, which means that $T_s  \models \bigvee \overline{U_{s}}$, a contradiction since by induction hypothesis $\tuple{T_s, U_s}$ is consistent. If (ii) holds, suppose that $\tuple{T_s, U_s \cup \{ \chi(c) \}}$ is not consistent; then,  $T_s \models (\bigvee U'_{s} )  \vee \chi(c) $  for some finite $U'_{s}  \subseteq U_{s}$. Quantifying away the new constant $c$, we must have $T_s \models \All{x}((\bigvee U'_{s} )  \vee \chi(x)) $, so   $T_s \models  (\bigvee U'_{s} )  \vee  (\All{x} \chi(x)) $, which contradicts the fact that we are considering case (ii).

{\sc Stage $s+1 = 3i+2: $}  At this stage we make sure that we will eventually obtain an $\exists$-Henkin theory.  If $\f_i$ is not of the form $\Exi{x} \chi (x)$, then let $T_{s+1}=T_s$ and $U_{s+1}=U_s$. Otherwise, as in Lemma 2 (2) from \cite{cin}, we have two cases to consider:

\begin{itemize}
\item[(i)] There is a finite $U'_s \subseteq U_s$ such that $T_s \cup\{\f_i\} \models \bigvee U'_s $, then we define $T_{s+1}=T_s $ and $U_{s+1}=U_s$.

\item[(ii)] Otherwise, define $T_{s+1}=T_s \cup \{\chi(c)\}$  (where $c$ is the first unused constant from $C$) and $U_{s+1} = U_s $.
\end{itemize}

Again, in both cases $\tuple{T_{s+1}, U_{s+1}}$ is consistent (check the proof of Lemma 2 (2) from~\cite{cin}).

{\sc Stage $s+1 = 3i+3: $} At this stage we work to ensure that our final theory will be linear. So given the pair $\tuple{\theta_i, \psi_i}$ proceed  as in Lemma 2 (3) from \cite{cin}. That is, we start from the assumption that $\tuple{T_s, U_s}$ is consistent and letting $U_{s+1} = U_s$ we look to add one of $\theta_i \rightarrow \psi_i$ or $\psi_i \rightarrow \theta_i$ to $T_s$ to obtain $T_{s+1}$ while making the resulting tableau $\tuple{T_{s+1}, U_{s+1}}$ consistent. Note that if $T_s \cup \{\theta_i \rightarrow \psi_i\} \models \bigvee U_{s+1}^{\prime}$ and  $T_s \cup \{\psi_i \rightarrow \theta_i\} \models \bigvee U_{s+1}^{\prime \prime}$, then $T_s \cup \{\theta_i \rightarrow \psi_i\} \models (\bigvee U_{s+1}^{\prime}) \vee (\bigvee U_{s+1}^{\prime \prime})$ and  $T_s \cup \{\psi_i \rightarrow \theta_i\} \models (\bigvee U_{s+1}^{\prime}) \vee (\bigvee U_{s+1}^{\prime \prime})$.   Hence, $T_s \cup \{(\psi_i \rightarrow \theta_i) \vee (\theta_i \rightarrow \psi_i)\} \models (\bigvee U_{s+1}^{\prime}) \vee (\bigvee U_{s+1}^{\prime \prime})$ by proof by cases, and since   $\models (\psi_i \rightarrow \theta_i) \vee (\theta_i \rightarrow \psi_i)$, we obtain that $T_s  \models (\bigvee U_{s+1}^{\prime}) \vee (\bigvee U_{s+1}^{\prime \prime})$, a contradiction.
\end{proof}

We can already introduce the general notion of type with respect to a given tableau.

\begin{Def}\label{d:type}
A pair of sets of formulas $\tuple{p, p^{\prime}}$ is a {\em type} of a tableau $\tuple{T, U}$ if the tableau $\tuple{T \cup p, U \cup p^{\prime}}$ is satisfiable.
\end{Def}

Let  $S_n(T, U)$ be the collection of all complete $n$-types (that is, pairs $\tuple{p, q}$ in $n$-many free variables such that for any $\phi$, either $\phi \in p$ or $\phi \in q$) of the tableau $\tuple{T, U}$. This is the space of prime filter-ideal pairs of the $n$-Lindebaum algebra of our logic with the quotient algebra constructed by the relation $\phi \equiv \psi $ iff $\tuple{T, U}\vDash \phi \leftrightarrow \psi$.

Given formulas $\sigma$ and $\theta$, we define  $[\tuple{\sigma, \theta}] = \{\tuple{p, p^{\prime}} \in S_n(T, U) \mid \sigma \in p, \theta \in p^{\prime}\}$. Consider now the collection $B = \{[\tuple{\phi, \psi}] \mid \phi, \psi \ \mbox{are  formulas}\}$. Intuitively, this simply contains all the sets of pairs of theories such that $\phi$ is expected to be true while $\psi$ is expected to fail, for any two  formulas $\phi, \psi$. $B$ is the base for a topology on  $S_n(T, U)$ since given $[\tuple{\phi, \psi}],  [\tuple{\phi^{\prime}, \psi^{\prime}}]\in B$, we have that  $[\tuple{\phi \wedge \phi^{\prime}, \psi \vee \psi^{\prime}}] \subseteq  [\tuple{\phi, \psi}] \cap [\tuple{\phi^{\prime}, \psi^{\prime}}] $ and $[\tuple{\phi, \psi}] \cap [\tuple{\phi \wedge \phi^{\prime}, \psi \vee \psi^{\prime}}] \in B$. Then, there is a topology on  $S_n(T, U)$ such that every open set of $T$ is just the union of a collection of sets from $B$.
A topological space is said to be \emph{strongly S-closed} if every family of open sets with the finite intersection property has a non-empty intersection \cite{don}. Moreover, we will say that a space is  \emph{almost strongly S-closed} if every family of \emph{basic} open sets with the finite intersection property has a non-empty intersection. 

\begin{Cor} \emph{(Tableaux almost strong S-closedness)}  Let $\tuple{T, U}$ be a tableau. If every $\tuple{T_0, U_0}$, with $|T_0|, |U_0|$  finite and $T_0 \subseteq T$ and $U_0 \subseteq U$,  is satisfiable, then $\tuple{T, U}$ is satisfied in some model. \end{Cor}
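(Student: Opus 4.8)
The plan is to reduce the statement to the Model Existence Theorem by showing that the hypothesis forces $\tuple{T, U}$ to be consistent. Recall that $\tuple{T, U}$ is consistent precisely when there is no finite $U_0 \subseteq U$ with $T \models \bigvee U_0$. So I would argue by contraposition: assuming $\tuple{T, U}$ is \emph{not} consistent, I would produce a finite subtableau that fails to be satisfiable, contradicting the hypothesis.

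First I would unpack inconsistency: there is a finite $U_0 \subseteq U$ such that $T \models \bigvee U_0$. Since $\bigvee U_0$ is a single formula (a finite disjunction, well-defined because $U_0$ is finite), consequence compactness applies and yields a finite $T_0 \subseteq T$ with $T_0 \models \bigvee U_0$. The finite subtableau $\tuple{T_0, U_0}$ is then exactly the object I want to show cannot be satisfiable, whence a contradiction with the assumption that every finite subtableau is satisfiable.

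Next I would derive that contradiction. By hypothesis $\tuple{T_0, U_0}$ is satisfiable, so there are a model $\model{M} = \tuple{\alg{A}, \struct{M}}$ and an $\model{M}$-evaluation $\eval{v}$ with $\semvalue{\f}^{\model{M}}_{\eval{v}} \geq \1^\alg{A}$ for each $\f \in T_0$ and $\semvalue{\psi}^{\model{M}}_{\eval{v}} < \1^\alg{A}$ for each $\psi \in U_0$. Because $T_0 \models \bigvee U_0$, this same model and evaluation must make $\bigvee U_0$ true, i.e.\ $\semvalue{\bigvee U_0}^{\model{M}}_{\eval{v}} \geq \1^\alg{A}$. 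Here the linear order of the \UL-chain $\alg{A}$ is crucial: the interpretation of a finite disjunction is the join in $\alg{A}$, which on a chain coincides with the maximum of the finitely many values $\semvalue{\psi}^{\model{M}}_{\eval{v}}$ with $\psi \in U_0$. Since each of these values is strictly below $\1^\alg{A}$, so is their maximum, whence $\semvalue{\bigvee U_0}^{\model{M}}_{\eval{v}} < \1^\alg{A}$, contradicting $\semvalue{\bigvee U_0}^{\model{M}}_{\eval{v}} \geq \1^\alg{A}$. This contradiction shows $\tuple{T, U}$ is consistent, and the Model Existence Theorem then directly supplies a model satisfying $\tuple{T, U}$.

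The only step demanding genuine care, rather than bookkeeping, is the reduction of the finite disjunction's value to a maximum: this is precisely where I would invoke the fact that all our algebras are linearly ordered, so that a finite join cannot reach the filter threshold $\1^\alg{A}$ unless one of its disjuncts already does. Everything else is a routine chaining of consequence compactness and the definition of consistency, so I expect this linearity observation to be the load-bearing part of the argument.
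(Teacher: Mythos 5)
Your proposal is correct and follows essentially the same route as the paper: reduce the claim to consistency of $\tuple{T,U}$, apply consequence compactness to get a finite $T_0 \subseteq T$ with $T_0 \models \bigvee U_0$, and derive a contradiction with the satisfiability of the finite subtableau $\tuple{T_0, U_0}$, after which the Model Existence Theorem finishes the job. The only difference is that you spell out the step the paper leaves implicit---that on a \UL-chain a finite join of values strictly below $\1^\alg{A}$ stays strictly below $\1^\alg{A}$---which is indeed the load-bearing use of linearity behind the paper's remark that $\tuple{T_0,\{\bigvee U_0\}}$ cannot be satisfiable.
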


\begin{proof} It suffices to show that $\tuple{T , U}$ is consistent. Suppose otherwise, that is, there is a finite $U_0 \subseteq U$ such that  $T \models \bigvee U_0$. But then for some finite $T_0 \subseteq T$, $T_0 \models  \bigvee U_0$. Moreover, this implies that $\tuple{T_0, \{ \bigvee U_0\}}$ cannot be satisfiable, but this is a contradiction with the fact that $\tuple{T_0, U_0}$ has a model. \end{proof}
 
\section{Main results}\label{s:Main}

Let us start by recalling the notion of (elementary) substructure (see e.g.~\cite{de2}). $\tuple{\alg{A},\struct{M}}$ is a \emph{substructure} of $\tuple{\alg{B},\struct{N}}$ if the following conditions are satisfied:
\begin{enumerate}
\item $M\subseteq N$.
\item For each $n$-ary function symbol $F\in \mathbf{F}$, and elements $\vectn{d}\in M$, $$F_{\struct{M}}(\vectn{d})=F_{\struct{N}}(\vectn{d}).$$
\item $\alg{A}$ is a subalgebra of $\alg{B}$.
\item For every quantifier-free formula $\f(\vectn{x})$, and $\vectn{d} \in M$, $$\semvalue{\f(\vectn{d})}_\struct{M}^\alg{A}= \semvalue{\f(d_1,\ldots,d_n)}_\struct{N}^\alg{B}.$$
\end{enumerate}

Moreover, $\tuple{\alg{A},\struct{M}}$ is an \emph{elementary substructure} of $\tuple{\alg{B},\struct{N}}$ if condition 4 holds for arbitrary formulas. In this case, we also say that $\tuple{\alg{B},\struct{N}}$ is an \emph{elementary extension} of $\tuple{\alg{A},\struct{M}}$. When instead of subalgebra and subset we have a pair of  injections $\tuple{g, f}$ satisfying the corresponding conditions above, we have an \emph{embedding}.

A sequence $\{\tuple{\alg{ A}_i, {\bf M}_i} \mid i < \gamma\}$ of models is  called a \emph{chain} when for all $i<j<\gamma$ we have that $\tuple{\alg{ A}_i , {\bf M}_i}$ is a substructure of  $\tuple{\alg{ A}_j, {\bf M}_j}$. If, moreover, these substructures are elementary, we speak of an \emph{elementary chain}. The \emph{union} of the chain $\{\tuple{\alg{ A}_i, {\bf M}_i} \mid i < \gamma\}$ is the structure $\tuple{\alg{ A}, {\bf M}}$ where $\alg{ A}$ is the classical union model of the classical chain of algebras $\{\tuple{\alg{ A}_i, {\bf M}_i} \mid i < \gamma\}$ while {\bf M} is defined by taking as its domain $\bigcup _{i<\gamma}{ M}_i$, interpreting the constants of the language as they were interpreted in each ${\bf M}_i$ and similarly with the relational symbols of the language. Let us note that since all the classes of algebras under consideration are classically $\forall_1$-axiomatizable, $\alg{ A}$ will always be an algebra of the appropriate sort. Observe as well that ${\bf M}$ is well defined given that $\{\tuple{\alg{ A}_i, {\bf M}_i} \mid i < \gamma\}$ is a chain.

\begin{Thm}\label{t:Unions}\emph{(Tarski-Vaught theorem on unions of elementary chains)} Let $\alg{ A}=\tuple{\alg{ A}, {\bf M}}$ be the union of an elementary chain  $\{\tuple{\alg{ A}_i, {\bf M}_i} \mid i < \gamma\}$. Then, for each sequence $\overline{a}$ of elements of\/ ${\bf M}_i$ and each formula $\f(\overline{x})$, $ \semvalue{\f (\overline{a})}^\alg{ A}_{\struct{M}} =  \semvalue{\f (\overline{a})}^\model{\alg{A}_i}_{\struct{M}_i}$. Moreover, the union  $\alg{ A}=\tuple{\alg{ A}, {\bf M}}$ is a safe structure. \end{Thm}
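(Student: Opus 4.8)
The plan is to prove the main equation by induction on the complexity of the formula $\f(\overline{x})$, mirroring the classical Tarski--Vaught argument but taking care of the lattice-valued (and possibly non-classical) semantics of the quantifiers. First I would set up the induction. The base case, atomic formulas, follows immediately from the fact that each $\tuple{\alg{A}_i,\struct{M}_i}$ is a substructure of the union: function symbols, constants, and predicate symbols are interpreted coherently along the chain, and since $\alg{A}$ is the union of the chain of algebras, the truth value of an atomic formula computed in $\tuple{\alg{A}_i,\struct{M}_i}$ agrees with that computed in $\tuple{\alg{A},\struct{M}}$. For the propositional connectives the step is routine: the value of $\circ(\f_1,\dots,\f_n)$ is obtained by applying the algebra operation $\circ^{\alg{A}}$ to the values of the subformulas, and since $\alg{A}_i$ is a subalgebra of $\alg{A}$, these operations agree; hence the induction hypothesis transfers the equality upward.

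The crux of the argument is the quantifier case, say $\f(\overline{x}) = \Exi{y}\,\psi(y,\overline{x})$ (the universal case is dual, using infima in place of suprema). By definition $\semvalue{\Exi{y}\,\psi(\overline{a})}^{\alg{A}}_{\struct{M}} = \sup_{\leq_{\alg{A}}}\{\semvalue{\psi(m,\overline{a})}^{\alg{A}}_{\struct{M}} \mid m \in M\}$, where $M = \bigcup_{i<\gamma} M_i$. Since $\overline{a}$ lies in some $\struct{M}_i$, I would first use the induction hypothesis to replace each $\semvalue{\psi(m,\overline{a})}^{\alg{A}}_{\struct{M}}$ (for $m \in M_j$ with $j \geq i$) by $\semvalue{\psi(m,\overline{a})}^{\alg{A}_j}_{\struct{M}_j}$, and then invoke the elementarity of the chain to pull this back: because $\tuple{\alg{A}_i,\struct{M}_i}$ is an elementary substructure of $\tuple{\alg{A}_j,\struct{M}_j}$, the value $\semvalue{\Exi{y}\,\psi(\overline{a})}^{\alg{A}_i}_{\struct{M}_i}$ equals $\semvalue{\Exi{y}\,\psi(\overline{a})}^{\alg{A}_j}_{\struct{M}_j}$, i.e.\ the supremum over $M_i$ already equals the supremum over each $M_j$. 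The key point is then to show that the supremum taken over the whole union $M$ coincides with the supremum over $M_i$ alone; this is where elementarity does the essential work, guaranteeing that no element introduced at a later stage of the chain raises (or lowers, in the universal case) the witnessed value.

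The main obstacle I anticipate is exactly this interplay between suprema/infima and the union of the domains: unlike in the classical two-valued case, where witnessing an existential merely requires producing a single element mapping the formula to the top value, here the value of $\Exi{y}\,\psi$ is a supremum in the linearly ordered algebra $\alg{A}$, and I must argue that adjoining elements from $M_j \setminus M_i$ cannot change this supremum. The argument is that for each such $m \in M_j$, elementarity gives $\semvalue{\psi(m,\overline{a})}^{\alg{A}_j}_{\struct{M}_j} \leq \semvalue{\Exi{y}\,\psi(\overline{a})}^{\alg{A}_j}_{\struct{M}_j} = \semvalue{\Exi{y}\,\psi(\overline{a})}^{\alg{A}_i}_{\struct{M}_i}$, so the later elements never exceed the bound already computed at stage $i$; conversely that bound is itself a supremum of values realized within $M_i \subseteq M$, so it is dominated by the supremum over $M$. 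Combining both inequalities yields equality, and since the chain is elementary this value is independent of $i$. Finally, safety of the union follows as a corollary: since each $\semvalue{\f(\overline{a})}^{\alg{A}}_{\struct{M}}$ equals the corresponding value in some $\struct{M}_i$, which is defined because each member of the chain is a (safe) model, every required supremum and infimum in $\tuple{\alg{A},\struct{M}}$ exists, so $\tuple{\alg{A},\struct{M}}$ is safe.
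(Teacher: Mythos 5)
Your overall strategy---induction on formula complexity, with the atomic and connective cases handled by the substructure/subalgebra conditions---matches the paper's proof, and your first inequality in the quantifier case (that $\semvalue{\Exi{y}\psi(\overline{a})}^{\alg{A}_i}_{\struct{M}_i}$ is an upper bound in $\alg{A}$ of the set $S=\{\semvalue{\psi(m,\overline{a})}^{\alg{A}}_{\struct{M}} \mid m \in M\}$, obtained from the induction hypothesis plus elementarity) is exactly the paper's. The genuine gap is in your converse direction. You argue that the stage-$i$ value ``is itself a supremum of values realized within $M_i \subseteq M$, so it is dominated by the supremum over $M$.'' This step is doubly problematic. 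First, it presupposes that the supremum of $S$ in $\alg{A}$ exists at all---but that existence is precisely the safety claim the theorem asks you to prove, so as written the safety part of your argument is circular. Second, even granting existence, the inference is invalid: $\semvalue{\Exi{y}\psi(\overline{a})}^{\alg{A}_i}_{\struct{M}_i}$ is the supremum of the $M_i$-values computed \emph{in the algebra} $\alg{A}_i$, and suprema are not preserved upward under subalgebra inclusions---the larger algebra $\alg{A}$ may contain new elements lying strictly between all the $M_i$-values and the old supremum, so $\sup_{\alg{A}} S$ could a priori be strictly \emph{smaller} than the stage-$i$ value. Monotonicity of suprema with respect to $M_i \subseteq M$ only applies when both suprema are computed in the same algebra, which is not your situation.

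The missing idea (and the paper's resolution) is to prove directly that the stage-$i$ value is the \emph{least} upper bound of $S$ in $\alg{A}$: take an arbitrary upper bound $u \in A$ of $S$; then $u \in A_j$ for some $j \geq i$; by the induction hypothesis, $u$ is an upper bound in $\alg{A}_j$ of $\{\semvalue{\psi(m,\overline{a})}^{\alg{A}_j}_{\struct{M}_j} \mid m \in M_j\}$, whose supremum in $\alg{A}_j$ exists (each $\model{M}_j$ is safe) and equals $\semvalue{\Exi{y}\psi(\overline{a})}^{\alg{A}_j}_{\struct{M}_j} = \semvalue{\Exi{y}\psi(\overline{a})}^{\alg{A}_i}_{\struct{M}_i}$ by elementarity of the chain; hence the stage-$i$ value is $\leq^{\alg{A}} u$. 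This establishes simultaneously that $\sup_{\alg{A}} S$ exists and equals the stage-$i$ value, yielding the displayed equality and the safety of the union in one stroke, with no circularity. Your closing remark that safety ``follows as a corollary'' would then indeed be correct, but only after the quantifier step is repaired in this way.
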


\begin{proof} We proceed by induction on the complexity of $\f$. When $\f$ is atomic, the result follows by definition of $\alg{ A}$. For any $n$-ary connective ${\circ}$, 
\begin{center}
$\semvalue{{\circ} (\psi_0 (\overline{a}), \dots, \psi_n(\overline{a}))}^\alg{ A}_{\struct{M}}
= {\circ}^\alg{A}(\semvalue{\psi_0(\overline{a}) }^\alg{ A}_{\struct{M}}, \dots, \semvalue{\psi_n(\overline{a}) }^\alg{ A}_{\struct{M}})= {\circ}^\alg{A_i}(\semvalue{\psi_0(\overline{a}) }^\alg{ A_i}_{\struct{M}_i}, \dots, \semvalue{\psi_n(\overline{a}) }^\alg{ A_i}_{\struct{M}_i}) = \semvalue{{\circ} (\psi_0 (\overline{a}), \dots, \psi_n(\overline{a}))}^\alg{ A_i}_{\struct{M}_i}$, 
\end{center}
where the second equality follows by the induction hypothesis and the definition of \alg{ A}.

Let $\f = \Exi{x} \psi$ (the case of $\f = \All{x} \psi$ is analogous). Consider $\semvalue{ \psi (\overline{a }, x)}^\alg{ A}_{\struct{M}}$ for $\overline{b} \in { M}^n$. Take $j > i$ sufficiently large such that $\bar{b} \in {M}_j^n$. By  induction hypothesis, $\semvalue{ \psi (\overline{a}, b)}^\alg{ A_j}_{\struct{M_j}}= \semvalue{ \psi (\overline{a}, b)}^\alg{ A}_{\struct{M}}$. By the elementarity of the chain, $ \semvalue{\Exi{x} \psi (\overline{a})}^\alg{ A_i}_{\struct{M_i}} = \semvalue{\Exi{x} \psi (\overline{a})}^\alg{ A_j}_{\struct{M_j}}$. Hence, $\semvalue{ \psi (\overline{a}, b)}^\alg{ A}_{\struct{M}} \leq^\alg{ A} \semvalue{\Exi{x} \psi (\overline{a})}^\alg{ A_i}_{\struct{M_i}}$. Then $\semvalue{\Exi{x} \psi (\overline{a})}^\alg{ A_i}_{\struct{M_i}}$ is an upper bound for $$ \{\semvalue{ \psi (\overline{a }, x)}^\alg{ A}_{\struct{M}} \mid \overline{b} \in { M}^n\}$$ in \alg{A}. Moreover, suppose that $u$ is another such upper bound in \alg{A}. This means that we can find $j \geqslant i$ such that $u \in A_j$. Then   $u$ is an upper bound in $\alg{A}_j$ of $$ \{\semvalue{ \psi (\overline{a }, x)}^\alg{ A_j}_{\struct{M_j}} \mid \overline{b} \in { M_j}^n\},$$which means that $$\semvalue{\Exi{x} \psi (\overline{a})}^\alg{ A_i}_{\struct{M_i}} = \semvalue{\Exi{x} \psi (\overline{a})}^\alg{ A_j}_{\struct{M_j}}  \leq^{\alg{A}_j}u,$$ so $$ \semvalue{\Exi{x} \psi (\overline{a})}^\alg{ A_i}_{\struct{M_i}}  \leq^{\alg{A}}u.$$ So $$ \semvalue{\Exi{x} \psi (\overline{a})}^\alg{ A_i}_{\struct{M_i}}  =\semvalue{\Exi{x} \psi (\overline{a})}^\alg{ A}_{\struct{M}}.$$

This establishes as well that the union is this chain of models is a safe structure, and hence, a model.
 \end{proof}

A structure $\tuple{\alg{ A}, {\bf M}}$ is said to be \emph{exhaustive} if every element of \alg{ A} is the value of some formula for some tuple of objects from $M$. In the rest of the paper, we will assume that all models are exhaustive. For that purpose we need to make sure that our constructions always give us back exhaustive models. Clearly, the model obtained in the model existence theorem is exhaustive.

\begin{Cor}The union of an elementary chain of exhaustive models is itself exhaustive. \end{Cor}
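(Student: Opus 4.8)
The plan is to reduce exhaustiveness of the union to exhaustiveness of the individual models in the chain, using the Tarski--Vaught theorem (Theorem~\ref{t:Unions}) to transport the computation of truth values from each stage up to the union. The crucial observation is that, by the very definition of the union of a chain, the underlying algebra $\alg{A}$ is the classical (set-theoretic) union of the algebras $\alg{A}_i$, so that $A=\bigcup_{i<\gamma}A_i$; hence every element of the union algebra already occurs at some stage of the chain.

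Concretely, I would argue as follows. Fix an arbitrary $a\in A$. Since $A=\bigcup_{i<\gamma}A_i$, there is some $i<\gamma$ with $a\in A_i$. Because the model $\tuple{\alg{A}_i,\struct{M}_i}$ is exhaustive by hypothesis, there exist a formula $\f(\overline{x})$ and a tuple $\overline{d}$ of elements of $M_i$ such that $a=\semvalue{\f(\overline{d})}^{\alg{A}_i}_{\struct{M}_i}$.

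Now I would invoke the Tarski--Vaught theorem for unions of elementary chains: since $\overline{d}$ is a tuple of elements of $M_i$, Theorem~\ref{t:Unions} yields $\semvalue{\f(\overline{d})}^{\alg{A}_i}_{\struct{M}_i}=\semvalue{\f(\overline{d})}^{\alg{A}}_{\struct{M}}$. Combining the two equalities gives $a=\semvalue{\f(\overline{d})}^{\alg{A}}_{\struct{M}}$, where $\overline{d}$ is a tuple of elements of $M_i\subseteq M$. Thus $a$ is the value of the formula $\f$ at the tuple $\overline{d}$ in the union structure. As $a$ was arbitrary, every element of $\alg{A}$ arises as such a value, which is precisely the statement that $\tuple{\alg{A},\struct{M}}$ is exhaustive.

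I do not expect a genuine obstacle here: once the elementary chain and Theorem~\ref{t:Unions} are in place, the result is essentially an immediate corollary. The only point requiring a little care is the first step, namely that the union algebra is literally the union of its components, so that each $a$ lives at some stage $i$ rather than being ``created'' in the passage to the limit; this is guaranteed by the definition of the union of a chain, together with the earlier remark that the relevant classes of algebras are classically $\forall_1$-axiomatizable, so that $\alg{A}$ is again an algebra of the appropriate sort.
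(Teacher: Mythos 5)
Your proof is correct and is essentially identical to the paper's own argument: both take an arbitrary element of the union algebra, locate it in some $\alg{A}_i$, apply exhaustiveness of $\tuple{\alg{A}_i,\struct{M}_i}$ to express it as $\semvalue{\f(\overline{d})}^{\alg{A}_i}_{\struct{M}_i}$, and then invoke Theorem~\ref{t:Unions} to transfer this value to the union structure. No gaps; your extra remark about the union algebra being literally $\bigcup_{i<\gamma}A_i$ is exactly the (implicit) first step of the paper's proof.
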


\begin{proof}
Suppose that $x \in \alg{A}$, then $x \in \alg{A}_i$ for some $i$, so $x = \semvalue{ \f (\overline{a })}^\alg{ A_i}_{\struct{M_i}}$ for some sequence $\bar{a}$ of elements of $M_i$ and formula $\f$, but then $x =\semvalue{ \f (\overline{a })}^\alg{ A}_{\struct{M}}$ by Theorem~\ref{t:Unions}.
\end{proof}

Given a model $\model{M} = \tuple{\alg{ A}, {\bf M}}$ and a collection $D \subseteq M$, we denote by $\mbox{Th}_D (\model{M})$ the theory of $\model{M}$ relative to $D$, that is, the collection of all sentences $\f$ in a language obtained by augmenting with a list of constants to denote the  elements from $D$ such that $\semvalue{\f}_{\struct{M}}^{\alg{A}} \geq \1^\alg{A}$. On the other hand, $\overline{\mbox{Th}}_D (\model{M})$ will simply denote the set-theoretic complement of $\mbox{Th}_D (\model{M})$.

We are finally ready to define the intended notion of type with respect to a model $\model{M}$ (observe that it is a particular case of Definition~\ref{d:type} when the tableau is $\tuple{\mbox{Th}_D (\model{M}),\overline{\mbox{Th}}_D (\model{M})}$).

\begin{Def} Let $\model{M}=\tuple{\alg{ A}, {\bf M}}$ be a model. If $\tuple{p, p^{\prime}}$ is a pair of sets of  formulas in some variable $x$ and parameters over some $D \subseteq M$, we will call $\tuple{p, p^{\prime}}$ a {\em type} of  $\tuple{\alg{ A}, {\bf M}}$ in $D$ if the tableau $\tuple{\mbox{\emph{Th}}_D (\model{M}) \cup p, \overline{\mbox{\emph{Th}}}_D (\model{M}) \cup p^{\prime}}$ is satisfiable (consistent). We will denote the set of  all such types by $S^{\tuple{\alg{ A}, {\bf M}}}(D)$.\end{Def}

The following defnition captures the notion of a model realizing as many types as possible (under a certain cardinal restriction).

\begin{Def} For any cardinal $\kappa$, a model $\model{M}$ is said to be {\em $\kappa$-saturated} if for any $D \subseteq M$ such that $|D| < \kappa$, any type in $S^\model{M}(D)$ is satisfiable in $\model{M}$.\end{Def}

Before we begin the proof of the main result below, we need to recall the notion of the elementary diagram of a structure. Given a model $\tuple{\alg{ A}, {\bf M}}$, by the {\em elementary diagram} of  $\tuple{\alg{ A}, {\bf M}}$, in symbols $\mbox{Eldiag}(\alg{ A}, {\bf M})$, we will denote the theory of $\tuple{\alg{ A}, {\bf M}}$ relative to the whole of $M$. In a nutshell, $\mbox{Eldiag}(\alg{ A}, {\bf M}) = \mbox{Th}_M(\alg{ A}, {\bf M})$. This notion has been studied in detail in \cite{de,de3,cin} and we refer the reader to those papers for further information. On the other hand, $\overline{\mbox{Eldiag}}(\alg{A}, {\bf M})$ will denote the set-theoretic complement of $\mbox{Eldiag}(\alg{A}, {\bf M})$. The important fact for our purposes is that, there is a canonical model (those models given by the model existence theorem) constructed from $\tuple{\mbox{Eldiag}(\alg{ A}, {\bf M}), \overline{\mbox{Eldiag}}(\alg{ A}, {\bf M}) }$ such that we can build an embedding from $\tuple{\alg{ A}, {\bf M}}$ into the new canonical model.

We can observe that in the above definition it suffices to consider types in one free variable. Indeed, the more general case of finitely many variables, say, $x_0, \dots, x_n$ can be reduced to the one variable case by a standard argument. Suppose that $\tuple{\mbox{Th}_D (\model{M}) \cup p, \overline{\mbox{Th}}_D (\model{M}) \cup p^{\prime}}$ is satisfiable in some model $ \tuple{{\bf B} , {\bf N}}$ (obtained by the model existence theorem) by a sequence $e_0, \dots, e_n \in N$. Thus, the type of $e_0$ with parameters over $D$ is realized in $\model{M} =\tuple{\alg{ A}, {\bf M}}$ by an element $e_0^{\prime}$. But then we can also realize in $\tuple{\alg{ A}, {\bf M}}$ the type $\tuple{T, U}$ where
\[
T= \{\f(x,e_0^{\prime} ) \mid \tuple{{\bf B} , {\bf N}} \models \f(e_1,e_0) \} 
\]
\[
U = \{\psi(x,e_0^{\prime} ) \mid \tuple{{\bf B} , {\bf N}}\not \models \psi(e_1,e_0) \}
\]
since it is satisfied in $ \tuple{{\bf B} , {\bf N}}$ by interpreting $e_0^{\prime}$ as $e_0$.
 Keep going this way until we finally realize the type of an element $e_n^{\prime}$ with parameters in $D \cup \{e_0^{\prime}, \dots, e_{n-1}^{\prime}\}$.

Given a collection of theories $\Psi$  of our language and a theory $T$, following Convention 3.22 from \cite{cin1}, we will write $T \Vdash \Psi$ if there is $S \in \Psi$ such that $T \models \f$ for each $\f \in S$.

\begin{Thm}  For each cardinal $\kappa$, each model can be elementarily extended to a $\kappa^+$-saturated model.
\end{Thm}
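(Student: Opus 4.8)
The plan is to obtain the $\kappa^+$-saturated extension as the union of a suitable elementary chain of length $\kappa^+$, using Theorem~\ref{t:Unions} to control the union and the regularity of $\kappa^+$ to locate parameter sets inside an initial segment of the chain. Concretely, starting from the given $\model{M}=\model{M}_0$, I would build an elementary chain $\{\model{M}_\alpha\mid\alpha<\kappa^+\}$ in which each successor $\model{M}_{\alpha+1}$ is an elementary extension of $\model{M}_\alpha$ realizing every type over all of $M_\alpha$, and in which limit stages are the unions $\model{M}_\lambda=\bigcup_{\alpha<\lambda}\model{M}_\alpha$. By Theorem~\ref{t:Unions} each such union is a safe structure and an elementary extension of every earlier $\model{M}_\alpha$, so the chain is genuinely elementary and its final union $\model{N}=\bigcup_{\alpha<\kappa^+}\model{M}_\alpha$ is an elementary extension of $\model{M}$.

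Granting the successor step, $\kappa^+$-saturation of $\model{N}$ follows by a standard cofinality argument. Given $D\subseteq N$ with $|D|<\kappa^+$, i.e.\ $|D|\le\kappa$, regularity of the successor cardinal $\kappa^+$ together with the fact that $N=\bigcup_{\alpha<\kappa^+}M_\alpha$ is an increasing union forces $D\subseteq M_\alpha$ for some $\alpha<\kappa^+$. If $\tuple{p,p'}\in S^{\model{N}}(D)$, then since $\model{M}_\alpha$ is an elementary substructure of $\model{N}$ and $D\subseteq M_\alpha$, the relative theories agree, $\mbox{Th}_D(\model{M}_\alpha)=\mbox{Th}_D(\model{N})$, so $\tuple{p,p'}$ is already a type of $\model{M}_\alpha$ over $D$. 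Extending it to a complete type over the whole of $M_\alpha$ (legitimate by the joint-consistency step below) and invoking the successor step, it is realized by some element of $M_{\alpha+1}\subseteq N$; elementarity of $\model{M}_{\alpha+1}$ in $\model{N}$ then guarantees that the same element realizes $\tuple{p,p'}$ in $\model{N}$. By the reduction to a single free variable already discussed, it suffices to treat one-variable types here.

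The heart of the argument, and its main obstacle, is therefore the successor step: every model $\model{M}$ has an elementary extension realizing all of its types over $M$. I would realize them simultaneously by the model-existence machinery: introduce a fresh constant $c_{\tuple{p,p'}}$ for each type and consider the tableau obtained by adjoining to $\tuple{\mbox{Eldiag}(\model{M}),\overline{\mbox{Eldiag}}(\model{M})}$ all the formulas $p(c_{\tuple{p,p'}})$ on the left and $p'(c_{\tuple{p,p'}})$ on the right. A model of this tableau is, via the canonical embedding attached to the elementary diagram, an elementary extension of $\model{M}$ in which every type is realized by the interpretation of its constant; so by the Model Existence Theorem it suffices to prove that this large tableau is consistent. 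By the almost strong S-closedness corollary this reduces to the satisfiability of an arbitrary finite subtableau, which mentions only finitely many of the new constants, hence only finitely many types, together with finitely many formulas of $\mbox{Eldiag}$ and $\overline{\mbox{Eldiag}}$.

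It is precisely this finite, joint step that is delicate in the graded setting. Because distinct types are witnessed by distinct fresh constants and each type lives in a single variable, the obstruction is not interaction between the constants but compatibility of each individual type with the \emph{full} elementary diagram of $\model{M}$: one must show that a pair $\tuple{p,p'}$ consistent with $\tuple{\mbox{Th}_M(\model{M}),\overline{\mbox{Th}}_M(\model{M})}$ stays consistent once the diagram is completed and the other finitely many witnesses are added. In classical logic this is immediate because finite fragments of a type are actually realized inside $\model{M}$; that shortcut is unavailable here, since the value of $\Exi{x}\chi$ is a supremum and a family of elements all giving $\chi$ a value strictly below $\1$ may nonetheless push $\semvalue{\Exi{x}\chi}$ up to $\1$, so ``unrealized in $\model{M}$'' does not translate into a usable entailment. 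I therefore expect the real work to be a joint-consistency (elementary amalgamation) lemma argued directly at the tableau level: assuming for contradiction that some finite subtableau entails a finite disjunction from the right-hand side, one uses consequence compactness to isolate finitely many diagram sentences, generalizes away the parameters lying outside the relevant type by replacing them with fresh variables (legitimate since those constants occur nowhere else), and then applies the local deduction theorem and proof by cases—exactly the moves used in the Model Existence Theorem—to push the contradiction back onto the consistency of a single one of the original types with $\tuple{\mbox{Th}_M(\model{M}),\overline{\mbox{Th}}_M(\model{M})}$. Making this generalization-and-reabsorption go through cleanly, in the absence of Boolean negation and with entailment read over all chains, is where I anticipate the main difficulty.
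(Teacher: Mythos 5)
Your outer architecture agrees with the paper's: an elementary chain of length $\kappa^+$ with unions controlled by Theorem~\ref{t:Unions}, regularity of $\kappa^+$ to trap each small parameter set $D$ in some $M_\alpha$, and the reduction to one free variable. You have also correctly located the crux: a pair consistent with $\tuple{\mbox{Th}_D(\model{M}_\alpha),\overline{\mbox{Th}}_D(\model{M}_\alpha)}$ must be shown consistent with the \emph{full} diagram tableau $\tuple{\mbox{Eldiag}(\model{M}_\alpha),\overline{\mbox{Eldiag}}(\model{M}_\alpha)}$, and the classical shortcut (finite fragments of a type are realized in $\model{M}_\alpha$ itself) is unavailable. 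Where you diverge is the successor step: the paper never needs joint consistency of several types at once, because it realizes \emph{one} type per step along a second, inner elementary chain (of length $|M|^{\kappa}$), whereas your simultaneous tableau with a fresh constant per type forces you to prove joint consistency of finitely many types together with the diagram.

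That joint step is a genuine gap, and the method you sketch for it would fail. Suppose $\mbox{Eldiag}(\model{M}_\alpha)\cup p_1^0(c_1)\cup p_2^0(c_2)\models \bigvee F_0\vee\bigvee q_1^0(c_1)\vee\bigvee q_2^0(c_2)$ for finite fragments of two types. Applying the local deduction theorem and generalizing away $c_2$ and then $c_1$, as you propose, gives that $\mbox{Eldiag}(\model{M}_\alpha)$ entails a sentence of the form $\All{x}\All{y}\bigl[(\f_1(x)\wedge\1)^m\to((\f_2(y)\wedge\1)^n\to\bigvee F_0\vee\bigvee q_1^0(x)\vee\bigvee q_2^0(y))\bigr]$. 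But this sentence has all its parameters in $M_\alpha$ and is entailed by the diagram, so it is \emph{true} in $\model{M}_\alpha$ and in every model of the diagram pair; it therefore never lands in $\overline{\mbox{Eldiag}}(\model{M}_\alpha)$ or in any $p_i'$, and it contradicts nothing about any single type. What it asserts is exactly that no one model of the diagram realizes the two fragments jointly --- which is the statement you are trying to refute, while the individual consistency of each $\tuple{p_i,p_i'}$ only supplies realizations in \emph{different} models $\model{N}_1,\model{N}_2$. There is no purely syntactic push-back onto a single type; you need an amalgamation argument: realize $p_1$ in an elementary extension $\model{N}_1\succeq\model{M}_\alpha$, prove that $\tuple{p_2,p_2'}$ \emph{persists} as a type with respect to $\model{N}_1$ (i.e.\ stays consistent with $\tuple{\mbox{Eldiag}(\model{N}_1),\overline{\mbox{Eldiag}}(\model{N}_1)}$), realize it there, and use elementarity to keep the first realization. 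That persistence lemma is precisely the paper's key computation (local deduction theorem, quantifying away the constants lying outside the type's parameter set, then transferring the resulting universal sentence through $\mbox{Th}_D$ between the type's witness model and $\model{M}_\alpha$), and iterating it one type at a time is exactly the paper's inner chain --- so your simultaneous tableau, once repaired, collapses back into the construction you were trying to avoid. Note finally that your saturation paragraph already leans on the same unproven persistence when it ``extends'' a type over $D$ to one over all of $M_\alpha$: that extension is not free, and it is where the paper does its real work.
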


\begin{proof} Let $\model{M} = \tuple{\alg{A},\struct{M}}$ be a model. Observe that, indeed, 
\begin{center}
$|\{D \subseteq M \mid |D| \leq \kappa\}| \leq |M|^{\kappa}$.
\end{center}
This means, together with that fact that $|S^\model{M}(D)| \leq 2^{\kappa}$, that we can list all   types in $S^\model{M}(D)$ for $D \subseteq M, |D| \leq \kappa$ as $\{\tuple{p_{\alpha}, p_{\alpha^{\prime}}} \mid \alpha < |M|^{\kappa}\}$. 

We can find a model $\tuple{\alg{ A}^{\prime} , {\bf M}^{\prime}}$ that realizes all  types in  $S^\model{M}(D)$ for any $D \subseteq M, |D| \leq \kappa$. We will use the union of elementary chains construction, defining a sequence of models $\{\tuple{\alg{ A}_{\alpha}, {\bf M}_{\alpha}} \mid {\alpha < |M|^{\kappa}}\}$  which is an elementary chain, and where $  \tuple{\alg{ A}_{\alpha}, {\bf M}_{\alpha}}   $ realizes $\tuple{p_{\alpha}, p_{\alpha^{\prime}}}$.

The goal is to 
 build the model $\bigcup_{\alpha < |M|^{\kappa}} \tuple{\alg{ A}_{\alpha}, {\bf M}_{\alpha}}$, which will be our $\tuple{\alg{ A}^{\prime} , {\bf M}^{\prime}}$.

We let

\begin{itemize}
\item[(i)] $\model{M}_{0}=\tuple{\alg{ A}_0, {\bf M}_0}= \tuple{\alg{ A}, {\bf M}}$

\item[(ii)]$\model{M}_{\alpha}=\tuple{\alg{ A}_{\alpha}, {\bf M}_{\alpha}} = \bigcup_{\beta < \alpha} \tuple{\alg{ A}_{\beta} , {\bf M}_{\beta}}$ when $\alpha$ is a limit ordinal.

\item[(iii)] $\model{M}_{\alpha +1}=\tuple{\alg{ A}_{\alpha +1} , {\bf M}_{\alpha+1}}$ is a elementary extension of $\tuple{\alg{ A}_{\alpha}, {\bf M}_{\alpha}}$ which realizes $\tuple{p_{\alpha}, p_{\alpha^{\prime}}}$. We build $\tuple{\alg{ A}_{\alpha +1} , {\bf M}_{\alpha+1}}$  using Lemma 3.24 \cite{cin1}, the construction of canonical models from that paper and our tableaux almost strong S-closedness. In what follows we will use the notation and definitions from \cite{cin1}. We start by showing that   
$$
\mbox{Eldiag}(\alg{ A}_{\alpha}, {\bf M}_{\alpha}) \cup p_{\alpha} \nVdash \{X \},
$$
where $X$ is an arbitrary finite subset of $\overline{\mbox{Eldiag}}(\alg{ A}_{\alpha}, {\bf M}_{\alpha}) \cup  p_{\alpha^{\prime}}$. 
 Observe that the set of theories $\{X\}$ is trivially deductively  directed in the sense of Definition 3.21 from  \cite{cin1}. Using the canonical model construction and Lemma 3.24 \cite{cin1} we can then provide a model for the tableau $\tuple{\mbox{Eldiag}(\alg{ A}_{\alpha}, {\bf M}_{\alpha}) \cup p_{\alpha}, X }$ for each such $X$. Hence, an application of tableaux almost strong S-closedness provides us with a model of $\tuple{\mbox{Eldiag}(\alg{ A}_{\alpha}, {\bf M}_{\alpha}) \cup p_{\alpha}, \overline{\mbox{Eldiag}}(\alg{ A}_{\alpha}, {\bf M}_{\alpha}) \cup  p_{\alpha^{\prime}} }$.

Suppose, for a contradiction, that for each $\x \in X$,

$$
\mbox{Eldiag}(\alg{ A}_{\alpha}, {\bf M}_{\alpha}) \cup p_{\alpha} \models \x.
$$

Then take $\psi \in X$. There are two possibilities. First suppose that $\psi \in \overline{\mbox{Eldiag}}\tuple{\alg{ A}_{\alpha}, {\bf M}_{\alpha}}$. Since  $\mbox{Eldiag}(\alg{ A}_{\alpha}, {\bf M}_{\alpha}) \cup p_{\alpha} \models \psi$, by the local deduction theorem, $ p_{\alpha} \models ( \f \wedge \bar{1} )^n \rightarrow   \psi$ where $\f$ is some lattice conjunction of elements of $\mbox{Eldiag}(\alg{ A}_{\alpha}, {\bf M}_{\alpha})$.
 Quantifying away the new constants (so only constants from the particular $A \subseteq M$  remain), we obtain that $ p_{\alpha} \models \All{\overline{x}} ((\f \wedge \bar{1})^n \rightarrow   \psi)$.
 But then, taking the model of $\tuple{\mbox{Th}_A (\model{M}) \cup p_{\alpha}, \overline{\mbox{Th}}_A (\model{M}) \cup  p_{\alpha^{\prime}} }$,   we get a contradiction because $ \All{\overline{x}} ((\f \wedge \bar{1})^n \rightarrow   \psi)$ would have to be in $\mbox{Th}_A (\model{M}) $, which in turn is contained in $\mbox{Th}_A (\model{M}_{\alpha}) $. But then $\semvalue{\f}_{\model{M}_{\alpha}}^{\alg{ A}_{\alpha}} \geq \1^{\alg{ A}_{\alpha}}$, so $\semvalue{\f \land \bar{1}}_{\model{M}_{\alpha}}^{\alg{ A}_{\alpha}} \geq \1^{\alg{ A_{\alpha}}}$ and hence, $\semvalue{(\f \land \bar{1})^n}_{\model{M}_{\alpha}}^{\alg{ A}_{\alpha}} \geq \1^{\alg{ A}_{\alpha}}$ which leads to a contradiction. On the other hand, suppose that $\psi \in p_{\alpha^{\prime}}$. Similarly, we can obtain that  $\mbox{Eldiag}(\alg{A}_{\alpha}, {\bf M}_{\alpha})  \models \All{\overline{x}} ((\f \land \1)^n \rightarrow \psi)$ where this time $\f$ is a lattice conjunction of elements from $p_{\alpha}$. Then $ \All{\overline{x}} ((\f)^n \rightarrow   \psi)$ would have to be in $\mbox{Th}_A (\model{M}_{\alpha})$, which in turn is contained in $\mbox{Th}_A (\model{M}) $ which would be a contradiction given the model of $\tuple{\mbox{Th}_A (\model{M}) \cup p_{\alpha}, \overline{\mbox{Th}}_A (\model{M}) \cup p_{\alpha^{\prime}} }$. 
\end{itemize}

Next we build another elementary chain to get the $\kappa^+$-saturated structure $\tuple{\alg{D} , \struct{O}}$. This time we put:
\begin{itemize}
\item[(i)] $\tuple{\alg{D}_0 , \struct{O}_0} = \tuple{\alg{A}, {\bf M}}$

\item[(ii)]$\tuple{\alg{D}_{\alpha} ,\struct{O}_{\alpha}} = \bigcup_{\beta < \alpha} \tuple{\alg{D}_{\beta},\struct{O}_{\beta}}$ when $\alpha$ is a limit ordinal.

\item[(iii)] $\tuple{\alg{D}_{\alpha +1} , \struct{O}_{\alpha+1}}$ is a model that elementarily extends $\tuple{\alg{D}_{\alpha} , \alg{O}_{\alpha}}$ and  realizes all types in  $S^{(\alg{D}_{\alpha} , \struct{O}_{\alpha})}(A)$ for any $A \subseteq M_{\alpha}, |A| \leq \kappa$.
\end{itemize}
Consider now $\bigcup_{\alpha < \kappa^+} \tuple{\alg{D}_{\alpha} , \struct{O}_{\alpha}}$, which will be our $\tuple{\alg{D} , \struct{O}}$. Now suppose that $A \subseteq N, |A| \leq \kappa$ and $\tuple{p, p^{\prime}} \in S^{\tuple{\alg{D},\struct{O}}}(A)$. By the regularity of the cardinal $\kappa^+$, we must have that indeed $A \subseteq O_{\alpha}$ for some $\alpha < \kappa^+$. But, of course, since $\mbox{Th}_A(\alg{D},\struct{O}) = \mbox{Th}_A(\alg{D}_{\alpha},\struct{O}_{\alpha})$ and  $\overline{\mbox{Th}}_A(\alg{D},\struct{O}) = \overline{\mbox{Th}}_A(\alg{D}_{\alpha},\struct{O}_{\alpha})$, $\tuple{p, p^{\prime}} \in S^{\tuple{\alg{D}_{\alpha},\struct{O}_{\alpha}}}(A)$, so it is in fact realized in $\tuple{\alg{D}_{\alpha +1},\struct{O}_{\alpha+1}}$, and hence in  $\tuple{\alg{D},\struct{O}}$. 
\end{proof}

\section{Conclusions}\label{s:Conclusions}
In this paper we have shown the existence of saturated models, that is, models realizing as many types as possible (given some cardinality restrictions). A complementary task would be that of building models realizing very few types, which in classical model theory is accomplished by means of the Omitting Types Theorem. Some work has already been started along these lines in the context of mathematical fuzzy logic in~\cite{MuNo,Caicedo:Omitting,cindes}, that have focused on types with respect to a theory. In a forthcoming investigation we plan to extend these works by considering, in the fashion of the present paper, omission of types given by tableaux.

\end{document}